\numberwithin{equation}{section}
\theoremstyle{plain}
\newtheorem{theorem}{Theorem}[section]
\newtheorem{proposition}[theorem]{Proposition}
\theoremstyle{definition}
\newtheorem{definition}[theorem]{Definition}
\theoremstyle{remark}
\newtheorem{remark}[theorem]{Remark}
\newtheorem*{acknowledgments}{Acknowledgments}
\DeclareMathOperator{\Km}{Km}
\DeclareMathOperator{\Hom}{Hom}
\DeclareMathOperator{\II}{II}
\DeclareMathOperator{\Ino}{Ino}
\begin{document}

\title[The Mordell-Weil lattice of an Inose surface]{The Mordell-Weil
  lattice of an Inose surface arising from isogenous elliptic curves}

\author{Kazuki Utsumi}
\address{College of Science and Engineering, Ritsumeikan
  University \endgraf 1-1-1 Noji-higashi, Kusatsu Shiga 525-8577 Japan}
\email{kutsumi@fc.ritsumei.ac.jp}

\subjclass[2020]{Primary 14J28; Secondary 14J27,14H52}
\keywords{$K3$ surface, elliptic surface, elliptic curve}

\begin{abstract}
  An elliptic $K3$ surface with two $\II^{*}$ fibers is called the
  Inose surface. In this paper, we give a method to find a section of
  an Inose surface corresponding to an isogeny of general degree
  between two elliptic curves. In particular, we show examples of
  bases of the Mordell-Weil lattices of Inose surfaces arising from
  isogenies of degrees $5$ and $6$.
\end{abstract}

\maketitle
 
\section{Introduction}

Elliptic $K3$ surfaces with large Picard numbers play an important
role in the study of the geometry, arithmetic and moduli of $K3$
surfaces. Shioda and Inose \cite{Shioda-Inose} classified singular
$K3$ surfaces, that is, complex $K3$ surfaces with maximum Picard
number $20$. They showed any elliptic $K3$ surface with two $\II^{*}$
fibers can be constructed as a double cover of the Kummer surface
$\Km(E_1 \times E_2)$ with the product of two elliptic curves $E_1$
and $E_2$. We denote this $K3$ surface by $F^{(1)}_{E_1, E_2}$ (or
$F^{(1)}$ for short). The Kodaira-N\'{e}on model of
$F^{(1)}_{E_1, E_2}$ is called the Inose surface associated with $E_1$
and $E_2$, and denoted by $\Ino(E_1, E_2)$. The notation of $F^{(1)}$
reflects that it is a part of the construction of elliptic $K3$
surfaces of high rank by Kuwata \cite{Kuwata:MW-rank}, where he
constructed $F^{(1)}, \ldots, F^{(6)}$ which has various Mordell-Weil
rank up to $18$. Their precise definitions and equations are given in
Section \ref{sec:inose}.

The Mordell-Weil lattice of $F^{(1)}$ has been known to be isomorphic
to the lattice $\Hom(E_1, E_2)\langle 2\rangle$ if $E_1$ and $E_2$ are
nonisomorphic (see Section \ref{sec:MWL}). Not much is known about the
coordinates of the section corresponding to a given
$\varphi \in \Hom(E_1, E_2)$ and the Weierstrass equation of $F^{(1)}$
except for $\deg \varphi =2$. In the case where $\deg \varphi =2$, the
calculations are straight forward. Recently, Kumar and Kuwata gave an
example of the case $\deg \varphi =4$ in \cite[Example
9.2]{Kumar-Kuwata}. Moreover, Kuwata and the author
\cite{Kuwata-Utsumi} wrote down a formula of the section of $F^{(1)}$
in the case where $\deg \varphi=3$ by modifying their method. They
first worked with the surface $F^{(6)}$, which has a simple affine
model that can be viewed as a cubic curve (named $C_u$ in
(\ref{eq:Cu})) with a rational point. Their key idea is finding
a conic curve passing certain $5$ points on the cubic curve, which
gives the sixth point of the intersection between the conic and cubic
curves. We generalize the relation between the cubic and conic curves
to the one between the cubic and of degree
$\lceil (\deg \varphi) /2\rceil $ curves by using the Cayley-Bacharach
theorem which is stated in Section \ref{sec:CBT}.

In this paper, we extend their method in \cite{Kumar-Kuwata,
  Kuwata-Utsumi} to write down the section of $F^{(1)}$
corresponding to an isogeny of general degree. We explain it in detail
in Section \ref{sec:rat}. The core of our method is Theorem
\ref{thm:passing}, which gives us two sections of $F^{(6)}$
lifting the section of $F^{(1)}$.

Finally, we study examples in Sections \ref{sec:iso5} and
\ref{sec:iso6}. We actually calculate the section of
$F^{(1)}$ corresponding to isogenies of degrees $5$ and $6$ according
to the method in Section \ref{sec:rat}, both of which form bases of
$F^{(1)}$.

\section{Inose surface}\label{sec:inose}

Throughout this paper the base field $k$ is
assumed to be a number field.

Let $\Km(E_1 \times E_2)$ be the Kummer surface associated with the
product of elliptic curves $E_1$ and $E_2$, that is, the minimal
resolution of the quotient surface $E_1 \times E_2/ \{\pm 1\}$. If the
two elliptic curves are defined by the equations
\begin{equation}\label{eq:E1E2}
  E_1 : y_1^2 = x_1^3+a_2x_1^2+a_4x_1+a_6, \quad E_2 : y_2^2 = x_2^3+a'_2x_2^2+a'_4x_2+a'_6,
\end{equation}
an affine singular model of $\Km(E_1 \times E_2)$ is given as the
hypersurface in $\mathbb{A}^3$ defined by the equation
\begin{equation}
  \label{eq:singularmodel}
  x_2^3+a'_2x_2^2+a'_4x_2+a'_6 = t^2 \left(x_1^3+a_2x_1^2+a_4x_1+a_6 \right),
\end{equation}
where $t=y_2/y_1$. Take a parameter $t=u^3$, and consider
(\ref{eq:singularmodel}) as the following cubic curve $C_u$ in
$\mathbb{P}^2 = \{(x_1 : x_2 : z)\}$ over $k(u)$.
\begin{equation}
  \label{eq:Cu}
  C_u : x_2^3 + a'_2x_2^2z+a'_4x_2z^2+a'_6z^3 = u^6 (x_1^3+a_2x_1^2z+a_4x_1z^2+a_6z^3)
\end{equation}
Then, this curve has a rational point $(x_1:x_2:z) = (1:
u^2:0)$. Using this point as the origin, we consider it as an elliptic
curve over $k(u)$, and convert it to the Weierstrass form
\begin{equation}
  \label{eq:WEQ}
 F^{(6)}_{E_1, E_2} :  Y^2 = X^3-\frac{1}{3}AX + \frac{1}{64}\left( \Delta_{E_1} u^6 + B+\frac{\Delta_{E_2}}{u^6}\right),
\end{equation}
where
\begin{equation}
  \begin{cases}
    A = (a_2^2-3a_4)({a_2'}^2-3a_4'),\\
    B=\frac{32}{27}\left( 2a_2^3-9a_2a_4+27a_6\right)\left(2{a'_2}^3-9a'_2a'_4+27a'_6\right),\\
    \Delta_{E_1}=16\left(a_2^2a_4^2-4a_2^3a_6+18a_2a_4a_6-4a_3-27a_6^2\right),\\
    \Delta_{E_2}=16\left({a'_2}^2{a'_4}^2-4{a'_2}^3{a'_6}+18{a'_2}{a'_4}{a'_6}-4{a'}_3-27{a'_6}^2\right).
  \end{cases}
\end{equation}
The change of coordinates between (\ref{eq:Cu}) and (\ref{eq:WEQ}) are given by
\begin{equation}
  \label{eq:trans}
  \begin{cases}
    X= \dfrac{c_6 u^6 + c_4u^4+c_2u^2+c_0}{3u^2\left( \left(3x_1+a_2z\right) u^2 -\left(3x_2+a'_2z\right) \right)}, \\ 
    Y= \dfrac{d_{10}u^{10}+d_6u^6+d_5u^4+d_0}{2u^3\left(\left(3x_1+a_2z\right) u^2 -\left(3x_2+a'_2z\right) \right)^2},
  \end{cases}
\end{equation}
where
\begin{equation*}
  \begin{aligned}
    c_6=& 6 (a_{2}^{2}-3 a_{4})x_1+3 (a_{2} a_{4}-9 a_{6}) z,\\
    c_4=& (a_{2}^{2}-3 a_{4}) (3x_2+a'_{2} z ), \\
    c_2=& -({a'_{2}}^{2}-3 a'_{4}) (3x_1+ a_{2}z),\\
    c_0=& -6({a'_{2}}^{2}-3a'_{4})x_2-3 (a'_{2}a'_{4}-9 a'_{6}) z, \\
    d_{10}=& -3(2 a_{2}^{3}-9 a_{2} a_{4}+27 a_{6}) x_{1}^{2}
    + 2 (a_{2}^{4}-9 a_{2}^{2} a_{4}-27 a_{2} a_{6}+27 a_{4}^{2})x_1 z\\
    & +(a_{2}^{3} a_{4}-27 a_{2}^{2} a_{6}+54 a_{4} a_{6})z^2, \\
    d_{6} =& -6 (a_{2}^{2}-3 a_{4}) ({a'_{2}}^{2}-3 a'_{4})  x_{1}z +3 (2 a_{2}^{3}-9 a_{2} a_{4}+27 a_{6}) x_{2}^{2}\\
    &+2 a'_{2} (2 a_{2}^{3}-9 a_{2} a_{4}+27 a_{6})  x_{2}z
    +(2 a_{2}^{3} a'_{4}-3 a_{2} a_{4} {a'_{2}}^{2}+27 a_{6} {a'_{2}}^{2}-54 a_{6} a'_{4}) z^{2},\\
    d_{4}=& 3 (2{a'_{2}}^{3}-9a'_{2} a'_{4}+27 a'_{6}) x_{1}^{2}+2 a_{2} (2 {a'_{2}}^{3}-9 a'_{2} a'_{4}+27 a'_{6})  x_{1}z\\
    & -6 (a_{2}^{2}-3 a_{4}) ({a'_{2}}^{2}-3a'_{4})x_{2}z
    -(3 a_{2}^{2} a'_{2} a'_{4}-2 a_{4} {a'_{2}}^{3}-27 {a_{2}}^{2} a'_{6}+54 a_{4} a'_{6}) z^{2},\\
    d_{0}= & -3 (2 {a'_{2}}^{3}-9 a'_{2} a'_{4}+27 a'_{6}) x_{2}^{2}
    +2 ({a'_{2}}^{4}-9 {a'_{2}}^{2} a'_{4}-27 a'_{2} a'_{6}+27 {a'_{4}}^{2}) x_{2}z\\
    & +({a'_{2}}^{3} a'_{4}-27{a'_{2}}^{2}a'_6+54 a'_{4} a'_{6}) z^{2}.
  \end{aligned}
\end{equation*}

\begin{remark}
  The origin $O=(1: u^2:0)$ is not an inflection point of the cubic
  $C_u$. Thus, three colinear points $P, Q, R \in C_u$ do not satisfy
  the equation $P+Q+R=O$ under the group law. Instead, we have
  $P+Q+R=\bar{O}$, where $\bar{O}$ is the third point of intersection
  between $C_u$ and the tangent line at $O$. 
\end{remark}

Let $s=t^2=u^6$. Define $F^{(1)}_{E_1, E_2}$ to be
\begin{equation}
  \label{eq:F1}
  F^{(1)}_{E_1, E_2} : Y^2 = X^3 - \frac{1}{3} AX +\frac{1}{64}\left( \Delta_{E_1} s + B + \frac{\Delta_{E_2}}{s}\right).
\end{equation}
This elliptic fibration has two reducible fibers of type $\II^*$ at
$s=0$ and $s=\infty$. The Kodaira-N\'{e}ron model of
$F^{(1)}_{E_1, E_2}$ is called the Inose surface associated with $E_1$
and $E_2$, and it is denoted by $\Ino(E_1, E_2)$.

\begin{definition}
  For $n \geq 1$, the elliptic surface $F^{(n)}_{E_1, E_2}$ (or
  $F^{(n)}$ for short) over $k$ is defined by
  \begin{equation}
    \label{eq:Fn}
    F^{(n)}_{E_1 E_2} : Y^2 = X^3 -\frac{1}{3}A X + \frac{1}{64}\left(\Delta_{E_1} s^n + B + \frac{\Delta_{E_2}}{s^n}\right).
  \end{equation}
\end{definition}

\begin{remark}
  (1) The Kodaira-N\'{e}ron model of $F^{(n)}$ is a $K3$
  surface for $n=1, \ldots, 6$, but for
  $n \geq 7$(\cite{Kuwata:MW-rank}).

  (2) The map $\Km(E_1, E_2) \to \mathbb{P}^1$ induced by
  $(x_1, x_2, t) \mapsto t$ in (\ref{eq:singularmodel}) is an elliptic
  fibration. Since $u^6=t^2$, this elliptic fibration is isomorphic to
  $F^{(2)}$. However, the isomorphism between
  (\ref{eq:singularmodel}) and (\ref{eq:Fn}) for $n=2$ may not be
  defined over $k$ itself. It is defined over an extension of $k$
  including some of the $2$-torsion points of $E_1$ and $E_2$.

  (3) The elliptic fibration $F^{(2)}$ is sometimes called Inose's
  pencil or Inose('s) fibration (cf. \cite{Shioda:correspondence},
  \cite{Kuwata-Shioda}, \cite{Kumar-Kuwata}). However, these names are
  recently also used for $F^{(1)}$ (cf. \cite{SS:ES}, \cite{SS:MWL}).
\end{remark}

\section{Mordell-Weil lattice of $F^{(1)}$}\label{sec:MWL}

Our goal is to write down an explicit section of
$F^{(1)}$ arising from an isogeny. In the case where $E_1$ and $E_2$
are not isogenous, the Mordell-Weil lattice $F^{(1)}(\bar{k}(s))$ is
trivial, and we have nothing to do. We are interested in the case
where $E_1$ and $E_2$ are isogenous and but not isomorphic over
$\bar{k}$ because of the following.

\begin{proposition}[{\cite[Theorem 6.3]{Shioda:correspondence}}]\label{thm:MWL}
  Let $E_1$ and $E_2$ be two elliptic curves not isomorphic to each
  other over $\bar{k}$. Then, the Mordell-Weil group
  $F^{(1)}(\bar{k}(s))$ is torsion free, and isomorphic to the lattice
  $\Hom_{\bar{k}}(E_1,E_2)\langle 2 \rangle$, where the pairing of
  $\Hom_{\bar{k}}(E_1, E_2)$ is given by
  \[
    (\varphi, \psi) = \frac{1}{2}\left( \deg(\varphi+\psi) - \deg \varphi - \deg \psi\right)
    \quad \varphi, \psi \in \Hom_{\bar{k}}(E_1, E_2).
  \]
  The notation $\langle n \rangle$ means that the pairing of the
  lattice is multiplied by $n$.
\end{proposition}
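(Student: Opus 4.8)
The plan is to identify the Mordell-Weil lattice through the Shioda-Inose structure, transporting the intersection data of the abelian surface $A = E_1 \times E_2$ to $\Ino(E_1, E_2)$. The factor $\langle 2 \rangle$ should already emerge on the abelian surface side, so I would first record $\mathrm{NS}(A)$. Writing $f_1 = \{O\}\times E_2$, $f_2 = E_1 \times \{O\}$, and $\Gamma_\varphi$ for the graph of $\varphi \in \Hom(E_1, E_2)$, the intersection numbers $f_1^2 = f_2^2 = 0$, $f_1\cdot f_2 = 1$, $\Gamma_\varphi\cdot f_1 = 1$, $\Gamma_\varphi\cdot f_2 = \deg\varphi$, and $\Gamma_\varphi\cdot\Gamma_\psi = \deg(\varphi-\psi)$ are standard. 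Orthogonalizing the graphs against $U = \langle f_1, f_2\rangle$ by setting $\gamma_\varphi = \Gamma_\varphi - (\deg\varphi)f_1 - f_2$, a direct expansion gives $\gamma_\varphi\cdot\gamma_\psi = -\bigl(\deg(\varphi+\psi)-\deg\varphi-\deg\psi\bigr) = -2(\varphi,\psi)$. Hence $\mathrm{NS}(A) \cong U \oplus \Hom(E_1,E_2)\langle -2\rangle$; since $U$ is unimodular it splits off orthogonally, and $T(A)$ is the orthogonal complement of $U \oplus \Hom(E_1,E_2)\langle -2\rangle$ in $H^2(A,\mathbb{Z}) \cong U^{3}$.

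On the K3 side, the fibration $F^{(1)}$ has trivial lattice $T_{\mathrm{triv}} = U \oplus E_8(-1)^{2}$, generated by the zero section, a general fiber, and the components of the two $\II^{*}$ fibers. This lattice is even, unimodular, of signature $(1,17)$, and I would extract two consequences at once. First, a unimodular sublattice is an orthogonal direct summand, so $\mathrm{NS}(\Ino) = U \oplus E_8(-1)^2 \oplus L$ with $L$ negative definite; in particular $T_{\mathrm{triv}}$ is primitive, the Mordell-Weil group $\mathrm{NS}(\Ino)/T_{\mathrm{triv}} \cong L$ is torsion free, and $\mathrm{MWL}(\Ino) \cong L(-1)$ with the height pairing. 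Second, since an even unimodular lattice of signature $(1,17)$ embeds uniquely into the K3 lattice $U^3 \oplus E_8(-1)^2$ with complement $U^{2}$, the piece $L$ embeds primitively into $U^{2}$ with orthogonal complement $T(\Ino)$ (the bound $\rho \le 20$ forces $\rank L \le 2$, consistent with $\rank\Hom \le 2$).

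The bridge between the two sides is the Shioda-Inose structure, which yields an isometry $T(\Ino) \cong T(A)$. Thus $L$ and $\Hom(E_1,E_2)\langle-2\rangle$ are negative definite lattices of the same rank, each embedded primitively in $U^{2}$ with isometric orthogonal complements. I would then invoke Nikulin's theory of primitive embeddings into unimodular lattices to conclude $L \cong \Hom(E_1,E_2)\langle-2\rangle$, whence $\mathrm{MWL}(\Ino) \cong \Hom(E_1,E_2)\langle 2\rangle$, which is the assertion.

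The hard part is this last step when $\rank\Hom(E_1,E_2) = 2$, i.e. the CM case: there $T(A)$ has rank only $2$ (indeed positive definite), and the uniqueness of primitive embeddings—automatic for indefinite complements of rank $\ge 3$ by the Eichler/Nikulin criteria—can fail, so the complement in $U^2$ need not pin down $L$ up to isometry. To close this case I would supplement the argument with Shioda's height formula: for a section $P$ one has $h(P) = 4 + 2(P\cdot O) - \sum_v\mathrm{contr}_v(P)$, and since both $\II^{*}$ fibers have trivial component group every local term $\mathrm{contr}_v(P)$ vanishes, so $h(P) = 4 + 2(P\cdot O)$ is an even integer. Matching this against the explicit correspondence $\varphi \mapsto P_\varphi$—checking additivity and the value $P_\varphi\cdot O = \deg\varphi - 2$ on the constructed sections—forces the pairing to equal $2\deg\varphi$ and removes the remaining ambiguity; this is also the computation the later sections realize explicitly for small degrees.
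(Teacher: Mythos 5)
A point of reference first: the paper offers no proof of this proposition --- it is quoted verbatim from \cite[Theorem 6.3]{Shioda:correspondence} and used as a black box --- so your argument can only be compared with Shioda's original one. Your route is a legitimate and standard alternative: the Gram data on $E_1\times E_2$, the orthogonalization $\gamma_\varphi=\Gamma_\varphi-(\deg\varphi)f_1-f_2$ with $\gamma_\varphi\cdot\gamma_\psi=-2(\varphi,\psi)$ (via the parallelogram law $\deg(\varphi+\psi)+\deg(\varphi-\psi)=2\deg\varphi+2\deg\psi$), the splitting off of the unimodular trivial lattice $U\oplus E_8(-1)^2$ giving torsion-freeness and $\mathrm{MWL}\cong L(-1)$, and the transport of $T(E_1\times E_2)\cong T(\Ino(E_1,E_2))$ through the Shioda--Inose structure are all correct, and they do settle the case $\rank\Hom(E_1,E_2)\le 1$, since a rank-one even lattice is determined by its discriminant form. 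Shioda's proof is different in spirit: he constructs an explicit correspondence $\Hom(E_1,E_2)\to \mathrm{MW}$ through the degree-$2$ rational maps relating $E_1\times E_2$, $\Km(E_1\times E_2)$ and $\Ino(E_1,E_2)$, and computes heights directly; this is also what entitles the present paper to speak of \emph{the} section corresponding to $\varphi$, which your abstract isometry by itself does not produce.

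The genuine gap is exactly where you flag it, and your patch does not close it. In the CM case $L$ and $\Hom(E_1,E_2)\langle-2\rangle$ are definite of rank $2$, so the equality of signatures and discriminant forms coming from Nikulin only places them in the same genus, and a genus of definite even binary lattices can contain several isometry classes. Your proposed remedy invokes ``the explicit correspondence $\varphi\mapsto P_\varphi$,'' its additivity, and the value $P_\varphi\cdot O=\deg\varphi-2$ --- but none of these have been constructed or verified at that point in the argument; producing such a correspondence and computing its height \emph{is} the content of Shioda's proof (and, for $d=5,6$, of Sections \ref{sec:iso5} and \ref{sec:iso6} of this paper). The height formula $h(P)=4+2(P\cdot O)$ is correct here (both $\II^*$ fibers contribute nothing), but it only tells you $L(-1)$ is even with minimum at least $4$; it cannot distinguish two classes in one genus. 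To finish you would need either Shioda's geometric correspondence with its height computation, or a finer lattice-theoretic input in the CM case (e.g.\ the classical identification of $T(E_1\times E_2)$ with the binary form attached to the CM order). As written, the non-CM case is proved and the CM case is asserted.
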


For a given $\varphi \in \Hom_{k}(E_1, E_2)$, we would like to compute the
section of $F^{(1)}$ corresponding to $\varphi$
explicitly. To do so, we consider the inclusion
\[
  \Hom_{\bar{k}}(E_1, E_2)\langle 2\rangle \simeq F^{(1)}(\bar{k}(s))
  \hookrightarrow \Hom_{\bar{k}}(E_1,E_2)\langle 12 \rangle \subset
  F^{(6)}(\bar{k}(u))
\]
induced by $s \mapsto u^6$, and we look for a section in
$F^{(6)}(k(u))$, which is converted from a rational point
on $C_u$ by (\ref{eq:trans}). We can obtain this rational point from
the Cayley-Bacharach theorem.

\section{The Cayley-Bacharach theorem}\label{sec:CBT}

In this section, we introduce the Cayley-Bacharach theorem. See
\cite{CBT} for details of this theorem.

\begin{theorem}[Cayley-Bacharach theorem {\cite[Theorem CB4]{CBT} }]\label{thm:CB} Let
  $X_1, X_2 \subset \mathbb{P}^2$ be plane curves of degree $d$ and
  $e$ respectively, meeting in a collection of $d \cdot e$ distinct
  points $\Gamma=\{p_1, \ldots, p_{de}\}$. If
  $C \subset \mathbb{P}^2$ is any plane curve of degree $d+e-3$
  containing all but one point of $\Gamma$, then $C$ contains all of
  $\Gamma$.
\end{theorem}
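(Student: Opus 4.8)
The plan is to translate the geometric statement into a question about the number of linear conditions that point sets impose on forms, and then to exploit the complete-intersection structure of $\Gamma$. Write $F,G \in R := k[x_0,x_1,x_2]$ for the defining forms of $X_1,X_2$, of degrees $d,e$, and set $m := d+e-3$ and $\Gamma' := \Gamma \setminus \{p_{de}\}$. Since the $de$ points are distinct, $F$ and $G$ share no common factor, so $F,G$ is a regular sequence and $\Gamma = V(F,G)$ is a complete intersection; because the $de$ distinct points already realize the B\'ezout number, the intersection is transversal and the scheme is reduced. The assertion that every curve $C$ of degree $m$ through $\Gamma'$ also contains $p_{de}$ is equivalent to the equality of graded pieces $(I_{\Gamma'})_m = (I_\Gamma)_m$; since $\Gamma' \subset \Gamma$ already gives one inclusion, it suffices to prove that $\Gamma'$ and $\Gamma$ cut out the same linear conditions on degree-$m$ forms, i.e.\ that $p_{de}$ imposes no new condition beyond those coming from $\Gamma'$.

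Next I would compute how many conditions $\Gamma$ itself imposes in degree $m$. Because $F,G$ is a regular sequence, the Koszul complex
\[
  0 \to R(-d-e) \to R(-d)\oplus R(-e) \to R \to R/(F,G) \to 0
\]
resolves $R/(F,G)$, and $(F,G)$ is Cohen--Macaulay hence saturated, and radical by transversality, so it is the full homogeneous ideal $I_\Gamma$. Reading off the Hilbert function and evaluating at $n=m$ gives
\[
  \dim_k (R/I_\Gamma)_m = \binom{d+e-1}{2} - \binom{d-1}{2} - \binom{e-1}{2} = de-1 .
\]
Thus $\Gamma$ imposes exactly $de-1$ conditions on forms of degree $m$; it fails to impose independent conditions by precisely one. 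Cohomologically, $H^1(\mathbb{P}^2, \mathcal{I}_\Gamma(m)) \cong k$ is one-dimensional. The corresponding statement for $\Gamma'$ sits in the exact sequence $0 \to \mathcal{I}_\Gamma \to \mathcal{I}_{\Gamma'} \to \mathcal{O}_{p_{de}} \to 0$, which after twisting by $m$ and taking cohomology shows that the desired equality $(I_{\Gamma'})_m = (I_\Gamma)_m$ is equivalent to the connecting map $k = H^0(\mathcal{O}_{p_{de}}(m)) \to H^1(\mathcal{I}_\Gamma(m)) = k$ being an isomorphism, i.e.\ to $H^1(\mathcal{I}_{\Gamma'}(m)) = 0$, i.e.\ to $\Gamma'$ imposing independent conditions on degree-$m$ forms.

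The hard part will be exactly this last independence: the Hilbert-function computation tells us only that a single unit of failure is present, not that it disappears upon deleting any one point. I would supply this from the self-duality of the complete intersection. Since the Koszul complex above is self-dual, $R/(F,G)$ is arithmetically Gorenstein, and this yields a perfect Cayley--Bacharach pairing identifying the failure of a subset to impose independent conditions in degree $a$ with the failure of its complement in the complementary degree $m-a$. Applying this to the partition $\Gamma = \Gamma' \sqcup \{p_{de}\}$ with $a=m$ reduces the independence of $\Gamma'$ in degree $m$ to the independence of the single point $\{p_{de}\}$ in degree $0$, and one point always imposes an independent condition on the constants. Hence $\Gamma'$ imposes the full $de-1$ conditions, matching $\Gamma$, and the argument closes. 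As a more hands-on alternative, avoiding the duality machinery, I would instead prove the independence directly: for each $q \in \Gamma'$ construct an explicit degree-$m$ curve through the remaining $de-2$ points but missing $q$, obtained by residuation (splitting those points off $F$ and $G$ via a product of auxiliary lines together with Max Noether's $AF+BG$ theorem). This is the genuinely computational core and the place where the geometry of the configuration must be used with care.
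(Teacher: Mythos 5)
The paper does not prove this statement at all: it is quoted verbatim from Eisenbud--Green--Harris (Theorem CB4 of \cite{CBT}) and used as a black box, so there is no internal proof to compare against. Your argument is correct and is essentially the modern proof given in that reference. The bookkeeping checks out: the Koszul resolution of the saturated, radical complete-intersection ideal $(F,G)=I_\Gamma$ gives
$\dim_k(R/I_\Gamma)_m=\binom{d+e-1}{2}-\binom{d-1}{2}-\binom{e-1}{2}=de-1$
for $m=d+e-3$, so $h^1(\mathcal{I}_\Gamma(m))=1$, and the skyscraper sequence correctly reduces the claim to $\Gamma'$ imposing independent conditions in degree $m$; your preliminary observations (no common component, reducedness from B\'ezout since the $de$ points are distinct, saturation from Cohen--Macaulayness) are all needed and all justified. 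The one place where the proof leans on an input as heavy as the theorem itself is the ``perfect Cayley--Bacharach pairing'': the statement that the failure of $\Gamma'$ to impose independent conditions in degree $a$ equals that of the complementary subscheme in degree $(d+e-3)-a$ is Theorem CB7 of the same reference, and deriving it from the self-duality of the Koszul complex (equivalently, from $R/(F,G)$ being Gorenstein with socle degree $d+e-3$) takes a genuine argument via duality for the canonical module that you assert rather than carry out. If that pairing is granted, the reduction to a single point imposing one independent condition on constants is clean and closes the proof; your proposed elementary alternative via residuation and Noether's $AF+BG$ theorem is the classical route but is only sketched. In short: correct, standard, and complete modulo the duality statement, whereas the paper simply cites the result.
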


\begin{remark}
  It is not actually necessary that $X_1$ and $X_2$ intersect in
  distinct points. For example, if $P \in X_1 \cap X_2$ is a point of
  multiplicity $2$, then one needs to require that $X_1, X_2$ and $C$
  have the same tangent direction at $P$.
\end{remark}

Applying Theorem \ref{thm:CB} with $d=3$, we have the following.

\begin{proposition}\label{prop:CB}
  Let $X \subset \mathbb{P}^2$ be a smooth cubic plane curve, and
  $\Gamma$ be a collection of distinct $3e-1$ points on $X$. Then,
  there exists the $3e$-th point $P$ on $X$ such that any plane curve
  $C \subset \mathbb{P}^2$ of degree $e$ containing all of
  $\Gamma$ passes through $P$.
\end{proposition}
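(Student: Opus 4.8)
The plan is to deduce this directly from the Cayley-Bacharach theorem (Theorem \ref{thm:CB}) by taking $d=3$ and choosing the second curve $X_2$ appropriately. The statement we want concerns a fixed smooth cubic $X$, a set $\Gamma$ of $3e-1$ points on it, and a curve $C$ of degree $e$; note that $d+e-3 = 3+e-3 = e$ is exactly the degree appearing in Theorem \ref{thm:CB} when $d=3$. So the degrees already match, and the only gap is that Theorem \ref{thm:CB} speaks about the $3e = d\cdot e$ intersection points of $X$ with another degree-$e$ curve $X_2$, whereas here we are only given $3e-1$ points to start with and must \emph{produce} the $3e$-th point.

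First I would produce the missing point. Since $X$ is a smooth cubic and hence irreducible, I would take any auxiliary plane curve $X_2$ of degree $e$ passing through the given $3e-1$ points of $\Gamma$ — such a curve exists because requiring passage through $3e-1$ points imposes at most $3e-1$ linear conditions on the $\binom{e+2}{2} = \tfrac{1}{2}(e+1)(e+2)$ coefficients, and for every $e\ge 1$ one checks $\tfrac{1}{2}(e+1)(e+2)-1 \ge 3e-1$, so the linear system is nonempty. By Bézout's theorem $X$ and $X_2$ meet in exactly $3e$ points counted with multiplicity; after choosing $X_2$ generically among those passing through $\Gamma$ (or invoking the remark following Theorem \ref{thm:CB} to handle tangency/multiplicity), the intersection is $\Gamma$ together with one additional point, which I would define to be $P$. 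The point $P$ is determined by $\Gamma$ and $X$ alone: it is the residual intersection, and the group law on $X$ shows $P$ is the unique point with $p_1 + \cdots + p_{3e-1} + P$ equal to the fixed class cut out by a degree-$e$ curve, independent of which $X_2$ was chosen.

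Having fixed $P$, the conclusion is immediate from Theorem \ref{thm:CB}. Given any plane curve $C$ of degree $e$ containing all of $\Gamma$, apply the theorem to the pair $X_1 = X$ (degree $3$) and $X_2$ (degree $e$), whose complete intersection is $\Gamma \cup \{P\}$: the curve $C$ has degree $e = d+e-3$ and contains all but (at most) one point of the intersection, namely it contains $\Gamma$, so by Cayley-Bacharach $C$ must contain the remaining point $P$ as well. This is exactly the assertion.

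The main obstacle I anticipate is the genericity/transversality bookkeeping rather than any deep idea: one must ensure that the auxiliary $X_2$ can be chosen so that its intersection with $X$ consists of $3e$ points in the sense required by Theorem \ref{thm:CB} (distinct points, or matching tangent directions at any point of higher multiplicity, per the remark), and one must verify that the residual point $P$ genuinely does not depend on the choice of $X_2$ — this independence is what makes $P$ well defined and is best seen via the group-law characterization on the smooth cubic $X$. Once these points are settled, the proposition is a direct specialization of the Cayley-Bacharach theorem.
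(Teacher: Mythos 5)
Your proposal is correct and matches the paper's intent exactly: the paper gives no proof at all, merely introducing the proposition with ``Applying Theorem \ref{thm:CB} with $d=3$, we have the following,'' and your argument (construct an auxiliary degree-$e$ curve $X_2$ through $\Gamma$, define $P$ as the residual intersection point, check independence of the choice via the group law, then apply Cayley--Bacharach to any other degree-$e$ curve through $\Gamma$) is precisely the fleshed-out version of that deduction. The only detail worth adding is that for $e\ge 3$ one should note the generic $X_2$ through $\Gamma$ does not contain $X$ as a component, which follows from the same dimension count you already give.
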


\section{A section of $F^{(1)}$ corresponding to an isogeny}\label{sec:rat}
In this section, we assume that there exists an isogeny
$\varphi: E_1 \to E_2$ of degree $d \geq 2$ over $k$, and
$j(E_1) \neq j(E_2)$. We find an explicit section of $F^{(1)}$
corresponding to $\varphi$.

Suppose that $E_1$ and $E_2$ are given by (\ref{eq:E1E2}) . Then,
$\varphi$ can be written in the form
\begin{equation}
  \label{eq:phi}
  \varphi : (x_1, y_1) \mapsto (x_2, y_2) = \left( \varphi_x(x_1), \varphi_y(x_1)y_1\right).
\end{equation}
We work with the cubic curve $C_u$ over $k(u)$ in
$\mathbb{P}^2 = \{(x_1: x_2: z)\}$ given by (\ref{eq:Cu}), which is
isomorphic over $k(u)$ to $F^{(6)}$ with the choice of origin
$O=(1 : u^2: 0)$. Consider the curve of degree $d$ given by
$x_2=\varphi_x(x_1)$. The intersection of these two curves (and $z=1$)
\begin{equation}
  \begin{cases}
    x_2^3+a'_2x_2^2+a'_4x_2+a'_6 = u^6\left( x_1^3+a_2 x_1^2+a_4x_1+a_6\right)\\
    x_2 =\varphi_x(x_1)
  \end{cases}
\end{equation}
gives a divisor of degree $3d$ in $C_u$. Since we have
$\varphi_x(x_1)^3+a'_2\varphi_x(x_1)^2 + a'_4\varphi_x(x_1) + a'_6 =
\varphi_y(x_1)^2 y_1^2 = \varphi_y(x_1)^2
\left(x_1^3+a_2x_1^2+a_4x_1+a_6\right)$, the first equation reduces to
\begin{equation}
  \label{eq:div3d}
  \left( \varphi_y(x_1) - u^3\right) \left(\varphi_y(x_1)+u^3\right)
  \left(x_1^3+a_2x_1^2+a_4x_1+a_6\right)=0.
\end{equation}
Let $p^{\pm}(x_1)$ be the numerator of
$\varphi_y(x_1)- \left( \pm u^3\right)$ and $r = \deg
p^{\pm}(x_1)$. Then, $p^{\pm}(x_1)$ defines the divisor
$D^{\pm}_{\varphi} = Q^{\pm}_1 + \cdots + Q^{\pm}_r$ on $C_u$, where
the $Q^{\pm}_i$ are the $\overline{k(u)}$-rational points on $C_u$ of
the form $(x_1: \varphi_x(x_1): 1)$ whose $x_1$ coordinates are the $r$
roots of $p^{\pm}(x_1)=0$.

\begin{theorem}\label{thm:passing}
  \begin{enumerate}[(i)]
  \item If $d=\deg \varphi$ is odd, then
    $r =\deg p^{\pm}(x_1)=(3d-3)/2$, and there exists a plane curve
    $C^{+}$ (resp. $C^{-}$) of degree $(d+1)/2$ passing through $r+2$
    points $Q^{+}_1, \ldots, Q^{+}_r$ (resp.
    $Q^{-}_1, \ldots, Q^{-}_{r}$) , $O$ and $\bar{O}$.

  \item If $d$ is even, then $r=(3d-2)/2$, and there exists a plane
    curve $C^{+}$ (resp. $C^{-}$) of degree $d/2$ passing through $r$
    points $Q^{+}_1, \ldots, Q^{+}_r$ (resp.
    $Q^{-}_1, \ldots, Q^{-}_r$).

  \item For both even $d=2l$ and odd $d=2l-1$, any curve $C^{+}$
    (resp. $C^{-}$) in (i) or (ii) passes through $3l$-th point
    $Q^{+}$ (resp. $Q^{-}$) on $C_u$, which is a $k(u)$-rational
    point.
  \end{enumerate}
\end{theorem}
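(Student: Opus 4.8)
The plan is to read off $r$ from the pole structure of $\varphi_y$, to produce $C^{\pm}$ by a dimension count, and then to obtain $Q^{\pm}$ together with its rationality from Proposition \ref{prop:CB}. Write $d=2l$ in the even case and $d=2l-1$ in the odd case, so that $C^{\pm}$ has degree $l$ in both cases and $C^{\pm}\cap C_u$ has degree $3l$ by B\'ezout. I will assume throughout that $\ker\varphi$ contains at most one point of order $2$ (for instance that it is cyclic); this holds for the isogenies relevant to a basis and in particular in the degree $5$ and $6$ examples, and the general case reduces to it, since a $\varphi$ divisible by $[2]$ yields twice the section of $\varphi/[2]$ in $\Hom_{\bar k}(E_1,E_2)$.

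\emph{The value of $r$.} Since $\varphi$ is a group homomorphism, $\varphi^{*}(dx_2/y_2)=c\,dx_1/y_1$ for a nonzero constant $c$, whence $\varphi_y=\varphi_x'/c$. Writing $\varphi_x=\phi/\Psi$ in lowest terms, V\'elu's formulas give $\deg\phi=d$ and $\deg\Psi=d-1$, where $\Psi$ is a product of $(x_1-x(Q))^2$ over the $\pm$-orbits of nonzero kernel points, the factor of the (at most one) $2$-torsion point of $\ker\varphi$ appearing only to the first power. Hence the poles of $\varphi_y=\varphi_x'/c$ sit exactly at the $x_1$-coordinates of the nonzero kernel points: order $3$ at each non-$2$-torsion orbit and order $2$ at the $2$-torsion point. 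Summing, $\deg\varphi_y=3(d-1)/2=(3d-3)/2$ when $d$ is odd and $\deg\varphi_y=2+3(d-2)/2=(3d-2)/2$ when $d$ is even. Finally, if $\varphi_y=f/g$ in lowest terms then $p^{\pm}=f\mp u^3 g$, and because $u$ is transcendental the leading term of $\mp u^3 g$ cannot cancel; thus $r=\deg p^{\pm}=\max(\deg f,\deg g)=\deg\varphi_y$, giving the stated values. (Geometrically this is B\'ezout for $\Phi\cap C_u$: the degree-$d$ curve $\Phi\colon x_2\Psi=\phi$ meets $C_u$ in $3d$ points; by (\ref{eq:div3d}) these are the $2r$ points $Q_i^{\pm}$ together with the affine $2$-torsion points of $E_1$, of which all three contribute when $d$ is odd but only two when $d$ is even---the kernel $2$-torsion point being a pole of $\varphi_x$---while one checks that no intersection lies on $z=0$.)

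\emph{The curve $C^{\pm}$ and the point $Q^{\pm}$.} In both cases the prescribed set $\Gamma^{\pm}$ has exactly $3l-1$ points: $\{Q_1^{\pm},\dots,Q_r^{\pm},O,\bar O\}$ with $r+2=3l-1$ when $d=2l-1$, and $\{Q_1^{\pm},\dots,Q_r^{\pm}\}$ with $r=3l-1$ when $d=2l$. The homogeneous forms of degree $l$ form a $\binom{l+2}{2}$-dimensional space, and vanishing at the $3l-1$ points imposes at most $3l-1$ linear conditions; since
\[
\binom{l+2}{2}-(3l-1)=\frac{l^{2}-3l+4}{2}\ge 1\qquad(l\ge 1),
\]
a nonzero such form exists, yielding $C^{\pm}$ and proving the existence statements in (i) and (ii). As $C_u$ is a smooth cubic and (generically) the $3l-1$ points are distinct, Proposition \ref{prop:CB} with $e=l$ furnishes a $3l$-th point $Q^{\pm}$ lying on every degree-$l$ curve through $\Gamma^{\pm}$, so that $C^{\pm}\cap C_u=\Gamma^{\pm}+Q^{\pm}$; this is the common point asserted in (iii). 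For rationality, $D_{\varphi}^{\pm}=\sum_i Q_i^{\pm}$ is cut out on $C_u$ by $p^{\pm}(x_1)\in k(u)[x_1]$ and $x_2=\varphi_x(x_1)\in k(u)(x_1)$, hence is a $k(u)$-rational effective divisor; together with $O,\bar O\in C_u(k(u))$ this makes $\Gamma^{\pm}$ a $k(u)$-rational divisor. The linear system above is therefore defined over $k(u)$ and nonempty, so I may take $C^{\pm}$ over $k(u)$; then $C^{\pm}\cap C_u$ is $k(u)$-rational of degree $3l$, and subtracting the $k(u)$-rational divisor $\Gamma^{\pm}$ leaves the single point $Q^{\pm}\in C_u(k(u))$.

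The main obstacle is Step 1: pinning down $r$ exactly. Everything rests on the precise pole orders of $\varphi_y$ (equivalently, that $\varphi_y=f/g$ is already in lowest terms with the predicted degrees), and here the even/odd dichotomy---and the very appearance of $O,\bar O$ in (i)---is dictated by how $E_1[2]$ meets $\ker\varphi$. This is also where the hypothesis that $\ker\varphi$ has at most one $2$-torsion point is essential: if $E_1[2]\subseteq\ker\varphi$ then all three $2$-torsion points become poles of $\varphi_x$, the count changes to $r=3d/2$, and the stated formula fails. A secondary, more routine point is to ensure that Proposition \ref{prop:CB} applies, i.e.\ that the $3l-1$ points are distinct so that no multiplicities intervene; this can be arranged by a genericity hypothesis on $\varphi$, or handled directly via the multiplicity version noted in the remark following Theorem \ref{thm:CB}.
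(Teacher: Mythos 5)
Your proof follows essentially the same route as the paper's: the same determination of $r$ from the pole structure of $\varphi_y$ (with the even/odd split governed by cancellation at a $2$-torsion point of $E_1$), the same dimension count $\binom{l+2}{2}>3l-1$ to produce $C^{\pm}$, and the same appeal to Proposition \ref{prop:CB} together with the $k(u)$-rationality of the divisor $\Gamma^{\pm}$ to get the rational point $Q^{\pm}$ in part (iii). The only substantive difference is that you justify the exact value of $r$ via V\'elu's formulas and $\varphi_y=\varphi_x'/c$, and make explicit the hypothesis that $\ker\varphi$ contains at most one point of order $2$ — facts the paper simply asserts and leaves implicit — so your version is, if anything, more complete on that step.
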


\begin{proof} 
  (i) Let $d=2l-1$. Since the denominator of $\varphi_y(x_1)$ and
  $x_1^2+a_2x_1^2+a_4x_1+a_6$ are relatively prime, we have
  $r = (3d-3)/2 =3l-3$. We look for a plane curve $C^{+}$ of degree
  $(d+1)/2=l$ over $k(u)$ given by
  \begin{equation}
    C^{+} : q^{+}(x_1, x_2, z) = c_1 x_1^l + c_2 x_1^{l-1}x_2 + \cdots +c_{n} z^l
  \end{equation}
  which passes through $Q^{+}_1, \ldots, Q^{+}_r, O$ and
  $\bar{O}=(\alpha: \beta: \gamma)$, where $n = (l+1)(l+2)/2$. The curve
  $C^{+}$ passes through $Q^{+}_1, \ldots, Q^{+}_r$ if and only
  $q^{+}(x_1, \varphi_x(x_1), 1)$ is divisible by $p^{+}(x_1)$. Adding
  two equations $q^{+}(1, u^2, 0) = q^{+}(\alpha, \beta, \gamma)=0$ which
  mean that $C^{+}$ passes through $O$ and $\bar{O}$, we have a
  system of $r+2$ homogeneous linear equations in $c_1, \ldots,
  c_n$. Since $n > r+2 $, there is a non-trivial solution of this
  system, which gives us the coefficients of the curve
  $C^{+}$. Similarly, we obtain a curve $C^{-}$ from the points
  $Q^{-}_1, \ldots, Q^{-}_r, O, \bar{O}$ and the polynomial
  $p^{-}(x_1)$.
  
  (ii) Let $d=2l$. Since cancellation occurs between the denominator
  of $\varphi_y(x_1)$ and $x_1^3+a_2x_1^2+a_4x_1+a_6$ at the $x_1$
  coordinate of one of the $2$-torsion points of $E_1$, we have
  $r=(3d-2)/2=3l-1$. We can find a plane curve $C^{\pm}$ of degree
  $d/2 = l$ passing through $Q^{\pm}_1, \ldots, Q^{\pm}_r$ in a
  similar way in (i).

  (iii) Any $C^{\pm}$ in (i) or (ii) contains $3l-1$ points on
  $C_u$. Thus, there exists a $3l$-th point $Q^{\pm}$ on $C_u$ by
  Proposition \ref{prop:CB}. Since
  $D^{+}_{\varphi}, D^{-}_{\varphi}, O$ and $\bar{O}$ are all defined
  over $k(u)$, $Q^{+}$ and $Q^{-}$ are $k(u)$-rational points on
  $C_u$.
\end{proof}

The $k(u)$-rational points $Q^{+}$ and $Q^{-}$ define the section
$P_{\varphi}$ of $F^{(1)}$ for which we are looking.

\begin{theorem}\label{thm:section-F1}
  Let $Q^{+}$ and $Q^{-}$ be the points on $C_u$ in Theorem
  \ref{thm:passing}. Let $\Psi: C_u \to F^{(6)}$ be the isomorphism
  over $k(u)$ defined by the formula (\ref{eq:trans}), and
  $P^{+}_{\varphi}$ (resp. $P^{-}_{\varphi}$) be the point in
  $F^{(6)}(k(u))$ given by $ \Psi(Q^{+})$ (resp. $\Psi(Q^{-})$). Then,
  $P^{+}_{\varphi} - P^{-}_{\varphi}$ is the image of
  $F^{(1)}(k(s)) \to F^{(6)}(k(u))$ induced by $s \mapsto u^6$. The
  height of its pre-image $P_{\varphi}$ in $F^{(1)}(k(s))$ is $2d$.
\end{theorem}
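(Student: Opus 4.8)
The plan is to prove the two assertions separately: first that $P^{+}_{\varphi}-P^{-}_{\varphi}$ is invariant under the Galois group $\mu_{6}=\operatorname{Gal}(\bar k(u)/\bar k(s))$ attached to $s\mapsto u^{6}$, so that it descends to a well-defined $P_{\varphi}\in F^{(1)}(k(s))$, and then that the height of $P_{\varphi}$ equals $2d$. The first step is to remove the dependence on the origin $O$ by working in $\operatorname{Pic}^{0}$. Intersecting the degree-$\lceil d/2\rceil$ curve $C^{\pm}$ of Theorem \ref{thm:passing} with $C_{u}$ and using the complete-intersection relation together with Proposition \ref{prop:CB}, one gets in both parity cases $[Q^{+}]-[Q^{-}]=[D^{-}_{\varphi}-D^{+}_{\varphi}]$ in $\operatorname{Pic}^{0}(C_{u})$; the contributions of $O$, $\bar O$ and the hyperplane class cancel in the difference. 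Since $\Psi$ is a group isomorphism carrying $O$ to the zero section, $P^{+}_{\varphi}-P^{-}_{\varphi}$ corresponds to $\Psi_{*}[D^{-}_{\varphi}-D^{+}_{\varphi}]\in\operatorname{Pic}^{0}(F^{(6)})$.

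For the descent, fix $\sigma\in\mu_{6}$ with $\sigma(u)=\zeta u$. Because the coefficients of both $C_{u}$ and $F^{(6)}$ involve only $u^{6}$, these curves are defined over $\bar k(s)$ and $\sigma$ acts on their $\operatorname{Pic}^{0}$; only $\Psi$ fails to be $\sigma$-equivariant, giving $\sigma\circ\Psi_{*}=\Psi^{\sigma}_{*}\circ\sigma$, where $\Psi^{\sigma}$ is $\Psi$ with $u$ replaced by $\zeta u$ in (\ref{eq:trans}). As $F^{(6)}$ is non-isotrivial, its generic fibre has no extra automorphisms, so $\Psi^{\sigma}\circ\Psi^{-1}=t_{w}\circ[\epsilon_{\sigma}]$ with $\epsilon_{\sigma}\in\{\pm1\}$; since translations act trivially on $\operatorname{Pic}^{0}$ and $[-1]$ acts as $-1$, this yields $\Psi^{\sigma}_{*}=\epsilon_{\sigma}\,\Psi_{*}$. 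Reading off (\ref{eq:trans}) that the $X$-coordinate is even and the $Y$-coordinate odd in $u$ gives $\Psi^{\tau}=[-1]\circ\Psi$ for $\tau\colon u\mapsto-u$, i.e. $\epsilon_{\tau}=-1$; as $\sigma\mapsto\epsilon_{\sigma}$ is a homomorphism $\mu_{6}\to\{\pm1\}$, this forces $\epsilon_{\sigma}=\zeta^{3}$. On the divisor side, $\sigma$ fixes $D^{\pm}_{\varphi}$ when $\zeta^{3}=1$ and interchanges them when $\zeta^{3}=-1$ (it sends $p^{\pm}$ to $p^{\mp}$), so $\sigma[D^{-}_{\varphi}-D^{+}_{\varphi}]=\zeta^{3}[D^{-}_{\varphi}-D^{+}_{\varphi}]$. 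Combining, $\sigma(P^{+}_{\varphi}-P^{-}_{\varphi})$ corresponds to $\epsilon_{\sigma}\zeta^{3}\,\Psi_{*}[D^{-}_{\varphi}-D^{+}_{\varphi}]=\zeta^{6}(\,\cdot\,)$, which is $P^{+}_{\varphi}-P^{-}_{\varphi}$. The difference is therefore $\mu_{6}$-invariant; its coordinates lie in $k(u)$ and are fixed by $\operatorname{Gal}(\bar k(u)/\bar k(s))$, hence belong to $k(s)$, and it is the image of a unique $P_{\varphi}\in F^{(1)}(k(s))$.

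For the height I would locate $P_{\varphi}$ in the Mordell--Weil lattice of Proposition \ref{thm:MWL}. Under $F^{(1)}(\bar k(s))\cong\Hom_{\bar k}(E_{1},E_{2})\langle 2\rangle$ every element has height $2(\psi,\psi)=2\deg\psi$ for its associated isogeny $\psi$. The points $Q^{\pm}_{i}$ are precisely the pairs $(P_{1},\pm\varphi(P_{1}))$ with $y_{2}/y_{1}=u^{3}$, so the class $[D^{-}_{\varphi}-D^{+}_{\varphi}]$, and hence $P_{\varphi}$, corresponds to $\pm\varphi$ under Shioda's correspondence; thus the height is $2\deg\varphi=2d$. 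As a cross-check, $s\mapsto u^{6}$ has degree $6$ and base change multiplies the height pairing by $6$ (this is exactly the inclusion $\langle 2\rangle\hookrightarrow\langle 12\rangle$ of Section \ref{sec:MWL}), so it is equivalent to verify $\langle P^{+}_{\varphi}-P^{-}_{\varphi},\,P^{+}_{\varphi}-P^{-}_{\varphi}\rangle_{F^{(6)}}=12d$ through Shioda's height formula, with the fibre contributions at the two $\II^{*}$ fibres and the intersection with the zero section controlled by $r=\deg p^{\pm}$.

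I expect the main obstacle to lie in this second part. The descent is formal once the single sign $\epsilon_{\tau}=-1$ is extracted from (\ref{eq:trans}), but pinning the height to the exact value $2d$ forces one to match the explicitly constructed $P_{\varphi}$ with the degree-$d$ isogeny $\varphi$ under the abstract isomorphism of Proposition \ref{thm:MWL}, or else to carry out the local contribution bookkeeping at the reducible fibres; it is this identification, rather than the Galois-theoretic descent, that carries the genuine geometric content.
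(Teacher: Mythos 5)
The paper does not actually prove this theorem: it cites \cite[Proposition 5.3]{Kuwata-Utsumi} and asserts the argument carries over, so your attempt cannot be matched line-by-line against the text. On its own merits, your first half (descent) is correct and complete. The identity $[Q^{+}-Q^{-}]=[D^{-}_{\varphi}-D^{+}_{\varphi}]$ in $\operatorname{Pic}^{0}(C_{u})$ holds in both parity cases because $C^{\pm}\cdot C_{u}$ is the full intersection divisor $D^{\pm}_{\varphi}+O+\bar O+Q^{\pm}$ (resp.\ $D^{\pm}_{\varphi}+Q^{\pm}$) of degree $3\deg C^{\pm}$; the cocycle $\sigma\mapsto\epsilon_{\sigma}$ is indeed a homomorphism to $\{\pm1\}$, the value $\epsilon_{\tau}=-1$ is correctly read off from the parity of (\ref{eq:trans}) in $u$, and the two signs $\epsilon_{\sigma}=\zeta^{3}$ and the swap $D^{+}\leftrightarrow D^{-}$ cancel. (A point worth making explicit: $O=(1:u^{2}:0)$ is \emph{not} fixed by $u\mapsto\zeta u$ unless $\zeta^{2}=1$, which is exactly why the translation part $t_{w}$ of $\Psi^{\sigma}\circ\Psi^{-1}$ is genuinely present; as you say, it dies in $\operatorname{Pic}^{0}$, so no harm is done.)

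The genuine gap is the height. You reduce ``$\langle P_{\varphi},P_{\varphi}\rangle=2d$'' to the claim that $P_{\varphi}$ corresponds to $\pm\varphi$ under the abstract isomorphism of Proposition \ref{thm:MWL}, justified only by the observation that the $Q^{\pm}_{i}$ lie on the graphs of $\pm\varphi$. That is a plausibility argument, not a proof: what your construction actually produces is the class of (graph of $-\varphi$) minus (graph of $\varphi$) restricted to the fibre, pushed through the chain $E_{1}\times E_{2}\dashrightarrow\Km(E_{1}\times E_{2})\to F^{(1)}$ and the base change $s\mapsto u^{6}$. Each step carries a normalization (the Kummer quotient, the degree of the covering between the $F^{(n)}$, the fact that you take a \emph{difference} of two graphs), and an unchecked factor at any of them changes the answer by a factor of $2$ or $4$; the assertion that everything conspires to give exactly $2d$ \emph{is} the content of the statement. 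To close it along your own ``cross-check'' route you would have to verify $\langle P^{+}_{\varphi}-P^{-}_{\varphi},P^{+}_{\varphi}-P^{-}_{\varphi}\rangle_{F^{(6)}}=12d$ via the explicit height formula, i.e.\ extract $(P^{\pm}\cdot O)$, $(P^{+}\cdot P^{-})$ and the component data at the reducible fibres from the degrees in $u$ of the coordinates of $Q^{\pm}$ — which is precisely the computation carried out in the cited Proposition 5.3 of Kuwata--Utsumi for $d=3$, and precisely the part you have left undone (as you candidly acknowledge in your last paragraph).
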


\begin{proof}
  This theorem is essentially same as \cite[Proposition
  5.3]{Kuwata-Utsumi}, and can be proven in the same way.
\end{proof}

We show examples in the cases where $\deg \varphi =5,6$ in the next
sections.

\section{Example: Isogeny of degree $5$}\label{sec:iso5}

In this section, we give an example of the section of $F^{(1)}$
arising from an isogeny of degree $5$.

Let $E_1$ and $E_2$ be elliptic curves over $k=\mathbb{Q}$ given by
\begin{equation}
  \label{eq:E1E2-5}
  E_1 : y_1^2 = x_1^3-4x_1^2+16, \quad E_2: y_2^2 = x_2^3-4x_2^2-160x_2-1264.
\end{equation}
The curves $E_1$ and $E_2$ are labeled by 11a.3 and 11a.2 respectively
in LMFDB\cite{LMFDB}. The map
$\varphi: E_1 \to E_2 : (x_1, x_2) \mapsto (\varphi_x(x_1),
\varphi_y(x_1)y_1)$ is a $5$-isogeny, where
\begin{equation}
  \label{eq:iso5}
  \begin{aligned}
    \varphi_x(x_1) &= \frac{x_{1}^{5}-8 x_{1}^{4}+48 x_{1}^{3}-512 x_{1}+1024}
    {x_{1}^{2} \left(x_1-4\right)^2},\\
    \varphi_y(x_1) &=\frac{\left(x_{1}^{3}+4 x_{1}^{2}+16
        x_{1}-64\right) \left(x_{1}^{3}-16 x_{1}^{2}+64
        x_{1}-128\right)}{x_{1}^{3} \left(x_{1}-4\right)^{3}}.
\end{aligned}
\end{equation}
In this case, the plane cubic curve $C_u$ is given by
\begin{equation}
  \label{eq:Cu5}
  C_u: x_2^3-4x_2^2z-160x_2z^2-1264z^3 = u^6\left( x_1^3-4x_1^2z+16z^3\right),
\end{equation}
which converts to 
\begin{equation}
  \label{eq:F6-5}
 F^{(6)} : Y^2= X^3 - \frac{7936}{3}X - 704 u^6 -\frac{6082432}{27} - \frac{10307264}{u^6}
\end{equation}
with the choice of origin $O=(1: u^2:0)$ by (\ref{eq:trans}). The
elliptic surface $F^{(1)}$ is given by $s=u^6$ in (\ref{eq:F6-5}).
\begin{equation}
  \label{eq:F1-5}
 F^{(1)} : Y^2= X^3 - \frac{7936}{3}X - 704 s -\frac{6082432}{27} - \frac{10307264}{s}
\end{equation}

We would like to compute the $k(u)$-rational point $P_{\varphi}$ in
$F^{(1)}$ in Theorem \ref{thm:section-F1}. Substituting
$x_2 =\varphi_x(x_1)$ (and $z=1$) into (\ref{eq:Cu5}), we obtain
\begin{equation}
  \frac{p^{+}(x_1) p^{-}(x_1) \left(x_1^3-4x_1+16\right)}{x_1^6\left(x_1-4\right)^6}=0, 
\end{equation}
where
\begin{equation}
  \label{eq:p5}
  \begin{aligned}
    p^{+}(x_1) =& \left(1-u^3\right)x_1^6 -12\left(1-u^3\right)x_1^5
    +16\left(1-3u^3\right)x_1^4-64\left(3-u^3\right)x_1^3\\
    & +1536x_1^2- 6144x_1+8192, \\
    p^{-}(x_1) =& \left(1+u^3\right)x_1^6 -12\left(1+u^3\right)x_1^5
    +16\left(1+3u^3\right)x_1^4-64\left(3+u^3\right)x_1^3\\
    &+1536x_1^2- 6144x_1+8192. 
  \end{aligned}
\end{equation}
Let $Q^{+}_1, \ldots, Q^{+}_6$ be the points on $C_u$ of the form
$(x_1: \varphi_x(x_1): 1)$ whose $x_1$ coordinates are the $6$ roots
of $p^{+}(x_1)=0$.  Applying Theorem \ref{thm:passing} with $d=5$, we
can find a plane cubic curve $C^{+}$ passing through the $8$ points
$Q^{+}_1, \ldots, Q^{+}_6, O$ and $\bar{O}$ by the method shown in the
proof of this theorem. Note that the point $\bar{O}$ is in this case
given by
\begin{equation}
  \bar{O} = \left(31u^6-372u^2+2501 : u^2(19u^6+12u^4+2129) :  9u^2(u^4-31)\right).
\end{equation}
In fact, since there is a one-dimensional family of cubic curves passing
through the above $8$ points, we can choose any one that is different
from $C_u$. For example, let
% \begin{equation}
%   \label{eq:q5}
%   \begin{aligned}
%     q^{+}(x_1, x_2, z) = & u^{2}(3 u^{10}-15 u^{9}+20 u^{8}+67 u^{7}-371 u^{6}+1024 u^{5}
%       -1727 u^{4}+1771 u^{3}\\
%       &-2068 u^{2}+3993 u -3993) x_{1}^{3}-(3 u^{10}-27 u^{9}+100 u^{8}-193 u^{7}+101 u^{6}\\
%       &+596 u^{5}-2299 u^{4}+3839 u^{3}-4136 u^{2}+3993 u -3993) x_{1}^{2} x_{2}-4 (4 u^{12}\\
%       &-20 u^{11}+31 u^{10}+53 u^{9}-372 u^{8}+1143 u^{7}-1843 u^{6}+1508 u^{5}-1419 u^{4}\\
%       &+3135 u^{3}-3036 u^{2}-1331 u +1331) x_{1}^{2} z -4 (3 u^{7}-20 u^{6}+65 u^{5}-118 u^{4}\\
%       &+107 u^{3}+143 u^{2}-517 u +517) x_{1} x_{2}^{2}+4 (3 u^{10}-25 u^{9}+90 u^{8}-161 u^{7}\\
%       &+33 u^{6}+728 u^{5}-2567 u^{4}+3997 u^{3}-3586 u^{2}+3069 u -3069) x_{1} x_{2} z\\
%       &+16(u^{12}-5 u^{11}+8 u^{10}-3 u^{9}-21 u^{8}+176 u^{7}-493 u^{6}+1089 u^{5}\\
%       &-3068 u^{4}+5411 u^{3}-5159 u^{2}+1804 u -1804) x_{1} z^{2}+8 (3 u^{7}-17 u^{6}\\
%       &+50 u^{5}-72 u^{4}-2 u^{3}+374 u^{2}-891 u +891) x_{2}^{2} z +32 (u^{9}-5 u^{8}+11 u^{7}\\
%       &-9 u^{6}-26 u^{5}+196 u^{4}-327 u^{3}+231 u^{2}-55 u +55) x_{2} z^{2}-64 (3 u^{10}\\
%       &-21 u^{9}+70 u^{8}-111 u^{7}+47 u^{6}+304 u^{5}-639 u^{4}-367 u^{3}+4554 u^{2}\\
%       &-9405 u +9405) z^{3}.
%   \end{aligned}
% \end{equation}
\begin{dmath}
  \label{eq:q5}
    q^{+}(x_1, x_2, z) =  u^{2}(3 u^{10}-15 u^{9}+20 u^{8}+67 u^{7}-371 u^{6}+1024 u^{5}
      -1727 u^{4}+1771 u^{3}\
      -2068 u^{2}+3993 u -3993) x_{1}^{3}-(3 u^{10}-27 u^{9}+100 u^{8}-193 u^{7}+101 u^{6}
      +596 u^{5}-2299 u^{4}+3839 u^{3}-4136 u^{2}+3993 u -3993) x_{1}^{2} x_{2}-4 (4 u^{12}
      -20 u^{11}+31 u^{10}+53 u^{9}-372 u^{8}+1143 u^{7}-1843 u^{6}+1508 u^{5}-1419 u^{4}
      +3135 u^{3}-3036 u^{2}-1331 u +1331) x_{1}^{2} z -4 (3 u^{7}-20 u^{6}+65 u^{5}-118 u^{4}
      +107 u^{3}+143 u^{2}-517 u +517) x_{1} x_{2}^{2}+4 (3 u^{10}-25 u^{9}+90 u^{8}-161 u^{7}
      +33 u^{6}+728 u^{5}-2567 u^{4}+3997 u^{3}-3586 u^{2}+3069 u -3069) x_{1} x_{2} z
      +16(u^{12}-5 u^{11}+8 u^{10}-3 u^{9}-21 u^{8}+176 u^{7}-493 u^{6}+1089 u^{5}
      -3068 u^{4}+5411 u^{3}-5159 u^{2}+1804 u -1804) x_{1} z^{2}+8 (3 u^{7}-17 u^{6}
      +50 u^{5}-72 u^{4}-2 u^{3}+374 u^{2}-891 u +891) x_{2}^{2} z +32 (u^{9}-5 u^{8}+11 u^{7}
      -9 u^{6}-26 u^{5}+196 u^{4}-327 u^{3}+231 u^{2}-55 u +55) x_{2} z^{2}-64 (3 u^{10}
      -21 u^{9}+70 u^{8}-111 u^{7}+47 u^{6}+304 u^{5}-639 u^{4}-367 u^{3}+4554 u^{2}
      -9405 u +9405) z^{3}.
\end{dmath}
Then, the cubic curve $C^{+}$ defined by $q^{+}(x_1, x_2, z)=0$ passes
through the above $8$ points. From there, by taking resultants and
factoring, we obtain the ninth point
$Q^{+}= (x_1(u): x_2(u): z(u))$ on $C_u$, where
% \begin{equation}
%   \label{eq:Q5}
%   \begin{aligned}
%     x_1(u) =& -u^{27}+15 u^{26}-111 u^{25}+513 u^{24}-1540 u^{23}+2376 u^{22}+3088 u^{21}-30688 u^{20}\\
%     &+86220 u^{19}-59104 u^{18}-525548 u^{17}+2708376 u^{16}-7467922 u^{15}+13121086 u^{14}\\
%     &-11661738 u^{13}-9637166 u^{12}+50518468 u^{11}-66858792 u^{10}-29250056 u^{9}\\
%     &+251665480 u^{8}-303420084 u^{7}-550150216 u^{6}+3061901612 u^{5}-7015381560 u^{4}\\
%     &+10464610827 u^{3}-10737431221 u^{2}+7073843073 u -2357947691,\\
%     x_2(u) =& -u^2(u^{27}-15 u^{26}+115 u^{25}-573 u^{24}+1980 u^{23}-4448 u^{22}+3668 u^{21}+16836 u^{20}\\
%     &-83240 u^{19}+177220 u^{18}-107980 u^{17}-458896 u^{16}+1042758 u^{15}+2513478 u^{14}\\
%     &-21451210 u^{13}+70822202 u^{12}-148448124 u^{11}+202614016 u^{10}-137728492 u^{9}\\
%     &-58548028 u^{8}+73258240 u^{7}+845573652 u^{6}-3426521076 u^{5}+7372270576 u^{4}\\
%     &-10698456879 u^{3}+10815379905 u^{2}-7073843073 u +2357947691),\\
%     z(u) = &u^4(u^3-11)(3 u^{18}-45 u^{17}+345 u^{16}-1746 u^{15}+6373 u^{14}-16869 u^{13}+29401 u^{12}\\
%     &-15718 u^{11}-91574 u^{10}+351032 u^{9}-606122 u^{8}+140822 u^{7}+2433673 u^{6}\\
%     &-8354445 u^{5}+16882525 u^{4}-23981958 u^{3}+24201573 u^{2}-15944049 u +5314683) .
%   \end{aligned}
% \end{equation}
\begin{dgroup}\label{eq:Q5}
  \begin{dmath*}
    x_1(u) = -u^{27}+15 u^{26}-111 u^{25}+513 u^{24}-1540 u^{23}+2376 u^{22}+3088 u^{21}-30688 u^{20}
    +86220 u^{19}-59104 u^{18}-525548 u^{17}+2708376 u^{16}-7467922 u^{15}+13121086 u^{14}
    -11661738 u^{13}-9637166 u^{12}+50518468 u^{11}-66858792 u^{10}-29250056 u^{9}
    +251665480 u^{8}-303420084 u^{7}-550150216 u^{6}+3061901612 u^{5}-7015381560 u^{4}
    +10464610827 u^{3}-10737431221 u^{2}+7073843073 u -2357947691,
  \end{dmath*}
  \begin{dmath*}
    x_2(u) = -u^2\left(u^{27}-15 u^{26}+115 u^{25}-573 u^{24}+1980 u^{23}-4448 u^{22}+3668 u^{21}+16836 u^{20}
    -83240 u^{19}+177220 u^{18}-107980 u^{17}-458896 u^{16}+1042758 u^{15}+2513478 u^{14}
    -21451210 u^{13}+70822202 u^{12}-148448124 u^{11}+202614016 u^{10}-137728492 u^{9}
    -58548028 u^{8}+73258240 u^{7}+845573652 u^{6}-3426521076 u^{5}+7372270576 u^{4}
    -10698456879 u^{3}+10815379905 u^{2}-7073843073 u +2357947691\right),
  \end{dmath*}
  \begin{dmath*}
    z(u) = u^4(u^3-11)\left(3 u^{18}-45 u^{17}+345 u^{16}-1746 u^{15}+6373 u^{14}-16869 u^{13}+29401 u^{12}
    -15718 u^{11}-91574 u^{10}+351032 u^{9}-606122 u^{8}+140822 u^{7}+2433673 u^{6}
    -8354445 u^{5}+16882525 u^{4}-23981958 u^{3}+24201573 u^{2}-15944049 u +5314683\right) .
  \end{dmath*}
\end{dgroup}
Replacing $u$ in $Q^{+}$ by $-u$, we have the point
$Q^{-}=(x_1(-u): x_2(-u): z(-u))$ on $C_u$. The two
$\mathbb{Q}(u)$-rational points $Q^{+}$ and $Q^{-}$ on $C_u$ are
converted to the two sections $P^{+}_{\varphi}$ and $P^{-}_{\varphi}$
of $F^{(6)}$ respectively by (\ref{eq:trans}). Computing
$P^{+}_{\varphi} - P^{-}_{\varphi}$ as the group law of elliptic curve
$F^{(6)}$ over $\mathbb{Q}(u)$, and replacing $u^6$ by $s$, we obtain
the section $P_{\varphi}$ of $F^{(1)}$. The coordinates of
$P_{\varphi}$ is given by
\begin{equation}
  \label{eq:P5}
  P_{\varphi} = \left( \frac{f(s)}{192 h(s)^2}, \; \frac{(s-121)g(s)}{512 h(s)^3} \right),
\end{equation}
where
% \begin{equation}
%   \label{eq:fgh5}
%   \begin{aligned}
%     f(s) =& 3 s^{10}+4242 s^{9}+2430679 s^{8}+730135384 s^{7}+129150804662 s^{6}+16365054527404 s^{5}\\
%     &+1890896931056342 s^{4}+156511003892745304 s^{3}+7628511948299823559 s^{2}\\
%     &+194918754081273106962 s +2018249984797680027603,\\
%     g(s) =& s^{14}+2242 s^{13}+2236395 s^{12}+1318219892 s^{11}+514922124233 s^{10}\\
%     &+141266126525854 s^{9}+27916724974734827 s^{8}+3983998405505436120 s^{7}\\
%     &+408728770355092602107 s^{6}+30281648805286481009374 s^{5}\\
%     &+1616046206494303287179993 s^{4}+60571847938187268807626612 s^{3}\\
%     &+1504534724917202541777070395 s^{2}+22083100659160664074343947522 s\\
%     &+144209936106499234037676064081
% ,\\
%     h(s) =& s(s+121)(5s^2+1958s+73205).
%   \end{aligned}
% \end{equation}
\begin{dgroup}
  \label{eq:fgh5}
  \begin{dmath*}
    f(s) = 3 s^{10}+4242 s^{9}+2430679 s^{8}+730135384 s^{7}+129150804662 s^{6}+16365054527404 s^{5}
    +1890896931056342 s^{4}+156511003892745304 s^{3}+7628511948299823559 s^{2}
    +194918754081273106962 s +2018249984797680027603,
  \end{dmath*}
  \begin{dmath*}
    g(s) = s^{14}+2242 s^{13}+2236395 s^{12}+1318219892 s^{11}+514922124233 s^{10}
    +141266126525854 s^{9}+27916724974734827 s^{8}+3983998405505436120 s^{7}
    +408728770355092602107 s^{6}+30281648805286481009374 s^{5}
    +1616046206494303287179993 s^{4}+60571847938187268807626612 s^{3}
    +1504534724917202541777070395 s^{2}+22083100659160664074343947522 s
    +144209936106499234037676064081,
  \end{dmath*}
  \begin{dmath*}
    h(s) = s(s+121)(5s^2+1958s+73205).
  \end{dmath*}
\end{dgroup}
The height of $P_{\varphi}$ is equal to $10$ by Theorem
\ref{thm:section-F1}, and the isogeny $\varphi$ generates
$\Hom_{\bar{\mathbb{Q}}}(E_1, E_2)$, that is,
$\Hom_{\bar{\mathbb{Q}}}(E_1, E_2) = \langle \varphi \rangle \simeq \langle 5
\rangle$. Thus, we have
\begin{equation}
  F^{(1)}(\bar{\mathbb{Q}}(s)) \simeq \Hom_{\bar{\mathbb{Q}}}(E_1, E_2)\langle 2\rangle
  \simeq \langle 10 \rangle \simeq \langle P_{\varphi} \rangle
\end{equation}
by Proposition \ref{thm:MWL}. It implies that $P_{\varphi}$ forms a basis
of the Mordell-Weil lattice $F^{(1)}(\mathbb{Q}(s))$.

\section{Example: Isogeny of degree $6$}\label{sec:iso6}

Finally, we give an example of the section of $F^{(1)}$ arising
from an isogeny of degree $6$.

Let $E_1$ and $E_2$ be elliptic curves over $k=\mathbb{Q}$ given by
\begin{equation}
  \label{eq:E1E2-6}
  E_1 : y_1^2 = x_1^3+x_1^2-x_1, \quad E_2: y_2^2=x_2^3+x_2^2-36x_2-140.
\end{equation}
The curves $E_1$ and $E_2$ are labeled by 20.a3 and 20.a2 respectively
in LMFDB\cite{LMFDB}. The map
$\varphi: E_1 \to E_2 ; (x_1, x_2) \mapsto (\varphi_x(x_1),
\varphi_y(x_1)y_1)$ is a $6$-isogeny, where
\begin{equation}
  \label{eq:iso6}
  \begin{aligned}
    \varphi_x(x_1) &= \frac{x_1^6+5x_1^4+16x_1^3-5x_1^2-1}{x_1(x_1+1)^2(x_1-1)^2},\\
    \varphi_y(x_1) &= \frac{(x_1^2+1)(x_1^2-4x_1-1)(x_1^4+4x_1^3+6x_1^2-4x_1+1)}{x_1^2(x_1+1)^3(x_1-1)^3}.
  \end{aligned}
\end{equation}
In this case, $C_u$ is given by
\begin{equation}
  \label{eq:Cu6}
  C_u : x_2^3+x_2^2z-36x_2z^2-140z^3 = u^6 (x_1^3+x_1^2z -x_1z^2),
\end{equation}
and it converts to
\begin{equation}
  \label{eq:F6-6}
  F^{(6)} : Y^2 = X^3 -\frac{436}{3}X +\frac{5}{4}u^6 -\frac{18997}{27} - \frac{62500}{u^6},
\end{equation}
with the choice of origin $O=(1:u^2:0)$ by (\ref{eq:trans}). The
elliptic surface $F^{(1)}$ is defined by replacing $u^6$ in
(\ref{eq:F6-6}) by $s$. Substituting $x_2 = \varphi_x(x_1)$ (and
$z=1$) in (\ref{eq:Cu6}), we have
\begin{equation}
  \frac{p^{+}(x_1) p^{-}(x_1) (x_1^2+x_1-1)}{x_1^3(x_1+1)^6(x_1-1)^6}=0,  
\end{equation}
where
\begin{equation}
  \label{eq:p6}
  \begin{aligned}
    p^{+}(x_1) =& (1-u^3)x_1^8 -(10-3u^3)x_1^6-32x_1^5-3u^3x_1^4-32x_1^3+(10+u^3)x_1^2-1,\\
    p^{-}(x_1) =& (1+u^3)x_1^8 -(10+3u^3)x_1^6-32x_1^5+3u^3x_1^4-32x_1^3+(10-u^3)x_1^2-1.
  \end{aligned}
\end{equation}
Applying Theorem \ref{thm:passing} with $d=6$, we obtain the
$1$-dimensional family of cubic curves passing through the $8$ points
$Q^{+}_1=(\alpha_1: \varphi(\alpha_1):1), \ldots, Q^{+}_8=(\alpha_8:
\varphi_x(\alpha_8):1)$, where $\alpha_1, \ldots, \alpha_8$ are $8$
roots of $p^{+}(x_1)=0$. For example, we can take the curve $C^{+}$ in
the family defined by $q^{+}(x_1, x_2, z)=0$, where
% \begin{equation}
%   \label{eq:q6}
%   \begin{aligned}
%     q^{+}(x_1, x_2, z) =& (u^{9}+2 u^{6}-340 u^{3}+1000) x_{1}^{3}+5 (u^{6}-12 u^{3}-100) x_{1}^{2} x_{2}\\
%     &+40 u^{3} (u^{3}-5)  x_{1}^{2}z+12 (u^{3}-10) x_{1} x_{2}^{2}+4 (19 u^{3}+10)  x_{1} x_{2}z\\
%     &-(u^{9}-10 u^{6}-340 u^{3}-2600)x_{1}z^2 +4 (u^{3}+10) x_{2}^{2}z\\
%     &-(u^{6}-32 u^{3}+180)  x_{2}z^2 -4 (u^{6}+10 u^{3}+300) z^{3}=0.
%     \end{aligned}
%   \end{equation}
\begin{dmath}
  \label{eq:q6}
    q^{+}(x_1, x_2, z) = (u^{9}+2 u^{6}-340 u^{3}+1000) x_{1}^{3}+5 (u^{6}-12 u^{3}-100) x_{1}^{2} x_{2}
    +40 u^{3} (u^{3}-5)  x_{1}^{2}z+12 (u^{3}-10) x_{1} x_{2}^{2}+4 (19 u^{3}+10)  x_{1} x_{2}z
    -(u^{9}-10 u^{6}-340 u^{3}-2600)x_{1}z^2 +4 (u^{3}+10) x_{2}^{2}z
    -(u^{6}-32 u^{3}+180)  x_{2}z^2 -4 (u^{6}+10 u^{3}+300) z^{3}.
\end{dmath}
From there, we obtain the ninth point $Q^{+}=(x_1(u): x_2(u): z(u))$ on $C_u$, where
%\begin{equation}
%   \label{eq:Q6}
%   \begin{aligned}
%     x_1(u) = & 4 \left(11 u^{6}-380 u^{3}+500\right) \left(u^{12}+4600 u^{6}+250000\right),\\
%     x_2(u) =& -4 u^{24}+44 u^{21}+8024 u^{18}-147440 u^{15}-543200 u^{12}+3688000 u^{9}\\
%     &-199600000 u^{6}+140000000 u^{3}-7000000000,\\
%     z(u) =& \big(u^{8}+6 u^{7}+18 u^{6}+20 u^{5}-40 u^{4}-160 u^{3}-500 u^{2}-1000 u -1000\big)\\
%     & \times \big(u^{16}-6 u^{15}+18 u^{14}-68 u^{13}+244 u^{12}-680 u^{11}+1080 u^{10}-960 u^{9}\\
%     &+2800 u^{8}-2400 u^{7}-10400 u^{6}+20000 u^{5}+50000 u^{4}-180000 u^{3}\\
%     &+500000 u^{2}-1000000 u +1000000\big).
%   \end{aligned}
% \end{equation}
\begin{dgroup}
  \label{eq:Q6}
  \begin{dmath*}
    x_1(u) =  4 \left(11 u^{6}-380 u^{3}+500\right) \left(u^{12}+4600 u^{6}+250000\right),
  \end{dmath*}
  \begin{dmath*}
    x_2(u) = -4 u^{24}+44 u^{21}+8024 u^{18}-147440 u^{15}-543200 u^{12}+3688000 u^{9}
    -199600000 u^{6}+140000000 u^{3}-7000000000,
  \end{dmath*}
  \begin{dmath*}
    z(u) = \left(u^{8}+6 u^{7}+18 u^{6}+20 u^{5}-40 u^{4}-160 u^{3}-500 u^{2}-1000 u -1000\right)
     \times \left(u^{16}-6 u^{15}+18 u^{14}-68 u^{13}+244 u^{12}-680 u^{11}+1080 u^{10}-960 u^{9}
    +2800 u^{8}-2400 u^{7}-10400 u^{6}+20000 u^{5}+50000 u^{4}-180000 u^{3}
    +500000 u^{2}-1000000 u +1000000\right).
  \end{dmath*}
\end{dgroup}
The point $Q^{-}$ is given by $Q^{-}=(x_1(-u): x_2(-u): z(-u))$. We
can convert $Q^{+}$ and $Q^{-}$ to $P^{+}_{\varphi}$ and
$P^{-}_{\varphi}$ in $F^{(6)}(\mathbb{Q}(u))$ by (\ref{eq:trans}).
Computing $P^{+}_{\varphi} - P^{-}_{\varphi}$ and replacing $u^6$ by
$s$, we obtain $P_{\varphi} \in F^{(1)}(\mathbb{Q}(s))$. It is given by
\begin{equation}
  \label{eq:P6}
  P_{\varphi}=\left( \frac{f(s)}{192 h(s)^2}, \; -\frac{(s^2+50000) g(s)}{512 h(s)^3} \right),
\end{equation}
where
% \begin{equation}
%   \label{eq:fgh6}
%   \begin{aligned}
%     f(s) =& 3 s^{12}-11568 s^{11}+16300384 s^{10}-9677907200 s^{9}+2291334841600 s^{8}\\
%     &-1084577868800000 s^{7}+702031826176000000 s^{6}\\
%     &+54228893440000000000 s^{5}+5728337104000000000000 s^{4}\\
%     &+1209738400000000000000000 s^{3}+101877400000000000000000000 s^{2}\\
%     &+3615000000000000000000000000 s +46875000000000000000000000000,\\
%     g(s) =& s^{16}-5784 s^{15}+13675968 s^{14}-16679958400 s^{13}+10338144240640 s^{12}\\
%     &-1473647335372800 s^{11}-2426837586892800000 s^{10}\\
%     &+1660357937152102400000 s^{9}-221006891984578560000000 s^{8}\\
%     &-83017896857605120000000000 s^{7}-6067093967232000000000000000 s^{6}\\
%     &+184205916921600000000000000000 s^{5}\\
%     &+64613401504000000000000000000000 s^{4}\\
%     &+5212487000000000000000000000000000 s^{3}\\
%     &+213687000000000000000000000000000000 s^{2}\\
%     &+4518750000000000000000000000000000000 s\\
%     &+39062500000000000000000000000000000000,\\
%     h(s) =& s(37 s^{4}-51760 s^{3}+8556000 s^{2}+2588000000 s +92500000000).
%   \end{aligned}
% \end{equation}
\begin{dgroup}
  \label{eq:fgh6}
  \begin{dmath*}
    f(s) = 3 s^{12}-11568 s^{11}+16300384 s^{10}-9677907200 s^{9}+2291334841600 s^{8}
    -1084577868800000 s^{7}+702031826176000000 s^{6}
    +54228893440000000000 s^{5}+5728337104000000000000 s^{4}
    +1209738400000000000000000 s^{3}+101877400000000000000000000 s^{2}
    +3615000000000000000000000000 s +46875000000000000000000000000,
  \end{dmath*}
  \begin{dmath*}
    g(s) = s^{16}-5784 s^{15}+13675968 s^{14}-16679958400 s^{13}+10338144240640 s^{12}
    -1473647335372800 s^{11}-2426837586892800000 s^{10}
    +1660357937152102400000 s^{9}-221006891984578560000000 s^{8}
    -83017896857605120000000000 s^{7}-6067093967232000000000000000 s^{6}
    +184205916921600000000000000000 s^{5}
    +64613401504000000000000000000000 s^{4}
    +5212487000000000000000000000000000 s^{3}
    +213687000000000000000000000000000000 s^{2}
    +4518750000000000000000000000000000000 s
    +39062500000000000000000000000000000000,
  \end{dmath*}
  \begin{dmath*}
    h(s) = s(37 s^{4}-51760 s^{3}+8556000 s^{2}+2588000000 s +92500000000).
  \end{dmath*}
\end{dgroup}
The isogeny $\varphi$ generates $\Hom(E_1, E_2)$. Thus, the section
$P_{\varphi}$ forms a basis of the Mordell-Weil lattice
$F^{(1)}(\mathbb{Q}(s))$.

\begin{acknowledgments}

  This research was supported by
  Ritsumeikan University Research Promotion Program for Acquiring
  Grants-in-Aid for Scientific Research.
\end{acknowledgments}

\end{document}